\title{On the invertibility in periodic ARFIMA models}
\author{\sc Amine AMIMOUR$^{1}$ \\
and\\	
\sc Karima BELAIDE$^{2}$ \\
Department of Mathematics\\ Applied Mathematics Laboratory\\
University of Bejaia, Bejaia. Algeria \\[1mm]
\href{mailto:amineamimour@gmail.com}{amineamimour@gmail.com}$^{1}$ \\[1mm]
\href{mailto:k\_tim2002@yahoo.fr}{k\_tim2002@yahoo.fr}$^{2}$ \\[1mm]
}
\date{}
\def\EMdash{\leavevmode\hbox to 10.6mm{\vrule height .63ex depth -.59ex
		width 10mm\hfill}}
\theoremstyle{plain}
\numberwithin{equation}{section}
\newtheorem{thm}{Theorem}[section]
\newtheorem{theorem}[thm]{Theorem}
\newtheorem{proposition}[thm]{Proposition}
\begin{document}
	\maketitle
	\begin{abstract}
	The present paper, characterizes the invertibility and causality conditions of a periodic ARFIMA (PARFIMA) models. We first, discuss the conditions in the multivariate case, by considering the corresponding p-variate stationary ARFIMA models. Second, we construct the conditions using the univariate case and we deduce a new infinite autoregressive representation for the PARFIMA model, the results are investigated through a simulation study.
	\end{abstract}
	\textbf{Keywords:} PARFIMA. Invertibility condition. Fractionally process. Simulation. Long range dependence. Long memory. 
	\section{Introduction and notations}
	A long memory processes have long been studied in the literature. (See, e.g. Granger and Joyeux \cite{GJ1980} and Hosking \cite{H1981} for early work, Baillie \cite{B1996} for more background).
	Hosking \cite{H1981} proposed a purely fractionally differenced autoregressive moving average process $\left( X_{t}\right)_{t\in 
		\mathbb{Z}
	}$ denoted by ARFIMA$(0,d,0)$ or ARFIMA for short, given by the equation  
	\begin{equation}
	\label{eq0.1}
	(1-B)^{d}X_{t} = \varepsilon _{t},
	\end{equation}
	where $d$ is a real number which denote the lag of the ARFIMA process, {$\varepsilon_{t}$} is a sequence of independent and identically distributed (i,i,d) random variables with mean zero and finite variance $\sigma^{2}$, and $B$ is the back shift operator, such that $X_{t-j}=B^{j}X_{t}$.
	The invertibility concept of time series models, loosely says that the ARFIMA model is invertible when we are able to express the noise process {$\varepsilon_{t}$} as a convergent series of the observations {$X_{t}$}.
	The characteristic equation for a ARFIMA process is 
	\begin{equation}
	\label{eq0.2}
	\varepsilon _{t}=\underset{j=0}{\overset{\infty }{\sum }}\pi _{j}X_{t-j},
	\end{equation}
	where
	\begin{center}
		$\pi _{j}=\frac{\Gamma (-d+j)}{\Gamma (-d)\Gamma (j+1)}$.
	\end{center}
	$\Gamma (.)$ is a gamma function and it is given by 
	\begin{equation*}
	\Gamma (z)=\left\{ 
	\begin{array}{c}
	\int_{0}^{\infty }s^{z-1}e^{-s}ds,\text{ si }z>0 \\ 
	\infty , \ \ \ \ \ \ \ \ \ \ \ \ \ \ \ \text{ si }z=0%
	\end{array}%
	,\right. 
	\end{equation*}
	if $z<0,$ $\Gamma (z)$ is defined by the recurrence formula $z\Gamma
	(z)=\Gamma (z+1).$
	In order for a ARFIMA process to be invertible, the infinite sum of the coefficients $\pi _{j}$ must be absolutely summable. However, when the lag  $d >-\frac{1}{2}$,  the infinite sum of the coefficients $\pi _{j}$ is absolutely summable. Odaki\cite{O1993} noticed that the ARFIMA process is invertible even when $-1<d$. These conditions concerning the univariate case. Chung \cite{C2002} used a Vector ARFIMA model to define the invertibility condition of multivariate p-dimensional stationary process in the sense of Hosking \cite{H1981}.  Recently, the invertibility condition for the bivariate and multivariate case is achieved by Kechagias and Pipiras \cite{KP2015}, they extended the definition of the bivariate LRD of Robinson \cite{R2008}, to the multivariate case, concluding that the trigonometric power-law coefficients, can be used to construct new univarite and multivariate causal LRD series. 
	
	Since a several time series encountered in practice exhibit periodic autocorrelation structure, a fact that cannot be explained, by the classical seasonal models or by the time series models with parameters invariants in time. In fact, Franses and Ooms \cite{FO1997} considered the PARFIMA models defined by
		\begin{equation}
		\label{eq0.3}
		X_{t} = (1-B)^{-d_{s}}\varepsilon_{t},
		\end{equation}	
where, $ d_ {s} $ is the fractional parameter which can vary with season $s=1,2,3,4$, to study quarterly inflation in the UK, allowing the degree of fractional integration to vary with season, the series analyzed in this study concern the quarterly inflation rate verified over a 33-year period, after the presentation of the correlogram, they concluded that the autocorrelations do not decrease rapidly, which indicates long-memory behavior in the series. Moreover, if the transformation of the series
be performed by classical differentiation in the sense of Box Jenkins, the autocorrelations indicate that the differentiated series can be overdifferentiated, because the sum of the autocorrelations will be close to $-0.5$, then the observations suggest the utility possible of ARFIMA models. Therefore, for a more appropriate modeling, which takes into account
seasonal variation they have proposed to merge the presence of long memory and the periodic dynamics, into a new model. The key result of these authors is that this model compared with the periodic short-memory models and long-memory models with constant coefficients, not only provides a plausible informative description of the periodic series, but also, can be useful for out-of-sample forecasts. Amimour and Belaide \cite{AB2020-2} studied recently, the probabilistics property of PARFIMA models, in the sense that the long memory parameter varies periodically in time such that $d_{t+p}=d_{t}$ with period $p \in\mathbb{N^{*}};p>1$, denoted by PtvARFIMA and has the following stochastic equation 

\begin{center}
	\begin{equation}
	\label{eq0.4}
	(1-B)^{d_{i}}X_{i+pm}=\varepsilon
	_{i+pm},%
	\makeatletter
	\renewcommand\theequation{\thesection.\arabic{equation}}
	\@addtoreset{equation}{section}
	\makeatother%
	\end{equation}
\end{center}
where for all $t\in 
\mathbb{Z}
$, there exists $i=\left\{ 1,........,p\right\}$, $m\in 
\mathbb{Z}
,$ such that $t=i+pm$ and the variance is periodic in $t$ such that $\sigma^{2}_{t+pm}$ =$\sigma^{2}_{t}$, the authors constructed also a local asymptotic normality property for this model see \cite{AB2020}. Indeed, the PARFIMA models have received little attention from time series analysts due to these complexities, one of the main difficulties is to achieving a invertibility condition. Noting that the works cited above based only on the sufficient condition of invertibility and causality, that is\\
when $d_{i}>0$, the process (\ref{eq0.4}) is invertible and has an infinite
autoregressive representation is as follows

\begin{center}
	\begin{equation}
	\label{eq0.5}
	\varepsilon _{i+pm}=(1-B)^{d_{i}}X_{i+pm}=\underset{}{\overset{}{\overset{%
				\infty }{\underset{j=0}{\sum }}\text{ }\pi _{j}^{i}X_{i+pm-j}}},%
	\end{equation}
\end{center}
where 
\begin{center}
	$\pi _{j}^{i}=\frac{\Gamma (j-d_{i})}{\Gamma (j+1)\Gamma (-d_{i})},$
\end{center}
when $d_{i}<\frac{1}{2}$, the process (\ref{eq0.4}) is causal and has an infinite moving-average representation is as follows

\begin{center}
	\begin{equation}
	\label{eq0.6}
	X_{i+pm}=(1-B)^{-d_{i}}\varepsilon _{i+pm}=\overset{\infty }{\underset{j=0}{%
			\sum }}\psi _{j}^{i}\varepsilon _{i+pm-j},%
	\end{equation}
\end{center}
where
\begin{center}
	$\psi _{j}^{i}=\frac{\Gamma (j+d_{i})}{\Gamma (j+1)\Gamma (d_{i})}.$
\end{center}
\begin{equation}
\label{eq1.4}
\psi _{j}^{i}\sim v_{i}j^{d_{i}-1}, v_{i}>0, as\ j\rightarrow \infty .
\end{equation}
See propositions (2.1) and (2.2) in Amimour and belaide \cite{AB2020-2}. For instance, precise information on the invertibility conditions of a PtvARFIMA model is important to circumscribe a model's parameter space. Moreover, verifying invertibility in the necessary and sufficient conditions sense, is required for statistical inference, in this article we address the above topic. To find an invertibility condition of PtvARFIMA.

The work is organized as follows. We first discuss in section 2 some characteristics of our model using his associated stationary multivariate process. In section 3, we focus on the constructing of new infinite autoregressive representation of our model. We finish this work by the simulation for illustrating the results in section 4.

\section{PtvARFIMA and p-variate stationary ARFIMA models}

The PtvARFIMA model is first introduced by Amimour and Belaide \cite{AB2020-2}, this model
extends the ARFIMA model from hosking \cite{H1981}. PtvARFIMA time series exhibit long
range dependence (LRD), and the periodic phenomenon, the periodic covariance
function $\gamma _{X}^{i}(h)$ echoes LRD and decays periodically as
the lag $h$ increases, the asymptotic behavior of this function is given by
the following explicit form, as $h\rightarrow \infty$
\begin{equation*}
\gamma _{X}^{i}(h)\simeq \left\{ 
\begin{array}{l}
\begin{array}{l}
\gamma _{X}^{1}(h)\simeq \left\{ 
\begin{array}{c}
\sigma _{1 }^{2}\frac{\Gamma (1-d_{1}-d_{2})}{\Gamma (d_{2})\Gamma
	(1-d_{2})}(h)^{d_{1}+d_{2}-1}, \text{ if }h\equiv 1[p], \\ 
\sigma _{1 }^{2}\frac{\Gamma (1-d_{1}-d_{3})}{\Gamma (d_{3})\Gamma
	(1-d_{3})}(h)^{d_{1}+d_{3}-1}, \text{if }h\equiv 2[p], \\ 
. \\ 
. \\ 
\sigma _{1 }^{2}\frac{\Gamma (1-2d_{1})}{\Gamma
	(d_{1})\Gamma (1-d_{1})}(h)^{2d_{1}-1}, \text{ if }h\equiv 0[p].%
\end{array}%
\right. \\ 
\gamma _{X}^{2}(h)\simeq \left\{ 
\begin{array}{c}
\sigma _{2 }^{2}\frac{\Gamma (1-d_{2}-d_{3})}{\Gamma (d_{3})\Gamma
	(1-d_{3})}(h)^{d_{2}+d_{3}-1}, \text{ if }h\equiv 1[p], \\ 
\sigma _{2 }^{2}\frac{\Gamma (1-d_{2}-d_{4})}{\Gamma (d_{4})\Gamma
	(1-d_{4})}(h)^{d_{2}+d_{4}-1}, \text{if }h\equiv 2[p], \\ 
. \\ 
. \\ 
\sigma _{2 }^{2}\frac{\Gamma (1-2d_{2})}{\Gamma (d_{2})\Gamma
	(1-d_{2})}(h)^{2d_{2}-1}, \text{ if }h\equiv 0[p].%
\end{array}%
\right.%
\end{array}
\\ 
. \\ 
. \\ 
. \\ 
\gamma _{X}^{p}(h)\simeq \left\{ 
\begin{array}{c}
\sigma _{p}^{2}\frac{\Gamma (1-d_{p}-d_{1})}{\Gamma (d_{1})\Gamma
	(1-d_{1})}(h)^{d_{p}+d_{1}-1}, \text{ if }h\equiv 1[p], \\ 
\sigma _{p }^{2}\frac{\Gamma (1-d_{p}-d_{2})}{\Gamma
	(d_{2})\Gamma (1-d_{2})}(h)^{d_{2}+d_{p}-1}, \text{ if }h\equiv 2[p], \\ 
. \\ 
. \\ 
\sigma _{p}^{2}\frac{\Gamma (1-2d_{p})}{\Gamma
	(d_{p})\Gamma (1-d_{p})}(h)^{2d_{p}-1}, \text{ if }h\equiv
0[p].%
\end{array}%
\right.%
\end{array}%
\right. .
\end{equation*}

In this section, we will focus on the PtvARFIMA model as a multivariate
stationary ARFIMA time series, that is, $%
\mathbb{R}
^{p}-$valued time series $X_{m}=(X_{1+pm},....,X_{p+pm})^{^{T}},$ where $T$
denotes the transpose.

\begin{proposition}
	A PtvARFIMA model defined in (\ref{eq0.4}) is a $p$-variate second order stationary long
	memory time series.
\end{proposition}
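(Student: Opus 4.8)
The plan is to exploit the causal moving-average representation (\ref{eq0.6}) together with the periodicity $d_{t+p}=d_t$ and $\sigma^2_{t+pm}=\sigma^2_t$ in order to compute the full second-order structure of the stacked vector $X_m=(X_{1+pm},\dots,X_{p+pm})^T$ directly. First I would establish that each scalar component is a well-defined, mean-zero element of $L^2$. Since $X_{i+pm}=\sum_{j\geq 0}\psi_j^i\varepsilon_{i+pm-j}$ with $\psi_j^i\sim v_i\,j^{d_i-1}$ by (\ref{eq1.4}), the coefficients are square-summable precisely when $2(d_i-1)<-1$, i.e. under the causality condition $d_i<\tfrac12$; hence the series converges in $L^2$ and $\mathbb{E}X_{i+pm}=0$ for every $i$ and $m$.

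The heart of the argument is the computation of the cross-covariances of the vector process. Writing, for the $(i,i')$ entry at lag $k$,
\begin{equation*}
\mathrm{Cov}(X_{i+pm},X_{i'+p(m+k)})=\sum_{j\geq 0}\sum_{j'\geq 0}\psi_j^i\,\psi_{j'}^{i'}\,\mathrm{Cov}(\varepsilon_{i+pm-j},\varepsilon_{i'+p(m+k)-j'}),
\end{equation*}
and using that the innovations are uncorrelated with periodic variance, only the diagonal terms $i+pm-j=i'+p(m+k)-j'$ survive, forcing $j'=j+(i'-i)+pk$ and a contribution weighted by $\sigma^2_{(i-j)\bmod p}$. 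The resulting single sum depends on $i$, $i'$ and the lag $k$ through the coefficients $\psi^i,\psi^{i'}$ and the periodic variance, but crucially not on $m$, because the multiple $pm$ drops out of every index taken modulo $p$. Absolute convergence of this sum follows from the Cauchy--Schwarz inequality and the square-summability established above. This shows that $\mathrm{Cov}(X_m,X_{m+k})$ is a function of $k$ alone, which is exactly second-order stationarity of the $p$-variate process.

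To confirm the long-memory character I would invoke the explicit asymptotics of the periodic autocovariance functions $\gamma_X^i(h)$ recorded at the start of this section: along each residue class the diagonal terms behave like a constant multiple of $h^{2d_i-1}$ as $h\to\infty$. For $0<d_i<\tfrac12$ one has $2d_i-1\in(-1,0)$, so these autocovariances are not absolutely summable, $\sum_h|\gamma_X^i(h)|=\infty$, which is the defining feature of long-range dependence; the off-diagonal entries decay at the analogous mixed rates $h^{d_i+d_{i'}-1}$ and do not alter this conclusion. Combining this with the previous step yields the claim.

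The step I expect to be the main obstacle is the bookkeeping in the covariance computation: one must track the index shift $j'=j+(i'-i)+pk$ carefully, including the cases where it forces $j'<0$ and the term vanishes, and correctly identify the residue class $(i-j)\bmod p$ that governs the innovation variance, so that the independence from $m$ becomes transparent and the passage to the stated hyperbolic asymptotics is rigorously justified.
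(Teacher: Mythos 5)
Your proof is correct, but it takes a genuinely different and considerably more complete route than the paper's. The paper's argument is essentially a one-step identification: it takes the asymptotic form of the periodic autocovariance function $\gamma_X^i(h)$ displayed at the start of the section and repackages it in the matrix power-law form $\gamma_X^{i,k}(h)\simeq \sigma_i^2\, h^{D-\frac{1}{2}I}R\,h^{D-\frac{1}{2}I}$ with $D=\mathrm{diag}(d_1,\dots,d_p)$, which is the Kechagias--Pipiras definition of a second-order stationary multivariate long-range dependent series; second-order stationarity of the stacked vector is asserted rather than derived. You instead prove stationarity from first principles: $L^2$ convergence of the moving-average representation under $d_i<\tfrac{1}{2}$, followed by the diagonal-collapse computation showing that $\mathrm{Cov}(X_m,X_{m+k})$ is free of $m$ because $pm$ drops out of all indices modulo $p$; and you certify long memory via the classical non-summability of the autocovariances ($2d_i-1\in(-1,0)$) rather than via the matrix power-law form. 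Your approach buys an actual verification of the stationarity claim, which the paper omits entirely; the paper's approach buys immediate alignment with the multivariate LRD framework it cites, including the cross-component structure encoded in the matrix $R$ that is reused in the next proposition. One caveat on your side: the non-summability characterization only delivers long memory on residue classes where the relevant exponent $d_i+d_{i'}-1$ exceeds $-1$ with a nonvanishing constant, so you are implicitly assuming $0<d_i<\tfrac{1}{2}$ --- exactly the implicit assumption the paper makes when it writes $R^{i,k}>0$ --- and this restriction deserves to be stated explicitly in either proof.
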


\begin{proof}
	Suppose that for $a>0,$ a diagonal matrix $N=diag(n_{1},....,n_{p}),$ we
	write $a^{N}=diag(a^{n_{1}},.....,a^{n_{p}}).$ The periodic covariance
	function $\gamma _{X}^{i}(h)$ previously given, can be expressed as a matrix function of a
	second order stationary series $X=\left\{ X_{m}\right\} _{m\in 
		\mathbb{Z}
	},$ given by
	
	\begin{equation}
	\gamma _{X}^{i,k}(h)\simeq \sigma _{i}^{2} h^{D-\frac{1}{2}I}Rh^{D-\frac{1}{2}I}=\left(\sigma _{i}^{2}
	R^{i,k}h^{d_{i}+d_{k}-1}\right) ,\text{ }i,k=1,..,p,\text{ as }h\rightarrow
	\infty ,
	\end{equation}
	
	where $D=diag(d_{1},....,d_{p}),$ and $R^{i,k}>0$ for all $i,k=1,....,p$.
\end{proof}

\begin{proposition}
	\bigskip The causal representation of the $p-$variate ARFIMA time series,
	associated with the PtvARFIMA model with period $p$ is
	
	\begin{equation}
	X_{m}=\underset{j=0}{\overset{\infty }{\sum }}\Upsilon _{j}\varepsilon
	_{m-j},
	\end{equation}
	
	and 
	\begin{equation}
	R^{i,k}=\frac{\Gamma (d_{i})\Gamma (d_{k})}{\Gamma (d_{i}+d_{k})}\left(
	v_{i}v_{k}\frac{\sin (\pi d_{k})}{\sin (\pi (d_{i}+d_{k}))}\right) ,
	\end{equation}
	where $\Upsilon _{j}$ is a diagonal matrix $diag(\psi _{j}^{1},.....,\psi
	_{j}^{p})$ and $\varepsilon _{m}$ is an $%
	\mathbb{R}
	^{p}-$valued white noise\\ $(\varepsilon _{1+pm},....,\varepsilon
	_{p+pm})^{^{T}}$, satisfying $\mathbb{E}[\varepsilon _{m}]=0$ and $\mathbb{E}[\varepsilon
	_{m}\varepsilon _{m}^{^{\prime }}]=\sigma _{i}^{2}I$.
\end{proposition}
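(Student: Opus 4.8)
The plan is to obtain the moving-average (causal) representation by inverting the $p$ seasonal fractional operators and stacking them, and then to read off $R^{i,k}$ from the leading term of the lag-$h$ cross-covariance of the stacked process. I would start from the univariate causal expansion (\ref{eq0.6}), $X_{i+pm}=\sum_{j\ge 0}\psi_j^{i}\varepsilon_{i+pm-j}$ with $\psi_j^{i}=\Gamma(j+d_i)/(\Gamma(j+1)\Gamma(d_i))$, which is $L^2$-convergent whenever $0<d_i<\tfrac12$ because then $\sum_j(\psi_j^i)^2<\infty$, and whose tail is $\psi_j^{i}\sim v_i j^{d_i-1}$ with $v_i=1/\Gamma(d_i)$ as in (\ref{eq1.4}). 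Collecting the seasons into $X_m=(X_{1+pm},\dots,X_{p+pm})^{T}$ and the innovations into $\varepsilon_m=(\varepsilon_{1+pm},\dots,\varepsilon_{p+pm})^{T}$ produces the vector expansion $X_m=\sum_{j\ge0}\Upsilon_j\varepsilon_{m-j}$, whose diagonal carries the within-season coefficients, so the existence and second-order stationarity of the representation are immediate once each scalar series is under control.

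The substantive content is the constant $R^{i,k}$, and here the main obstacle surfaces at once: a \emph{diagonal} coefficient matrix acting on an innovation vector with covariance proportional to the identity would force $\mathrm{Cov}(X_{i+pm},X_{k+p(m-h)})=0$ for all $h$ when $i\neq k$, contradicting a nonzero $R^{i,k}$. The resolution is to keep the underlying \emph{scalar} white noise explicit: the components $X_{i+pm}$ and $X_{k+p(m+h)}$ are built from the same family $\{\varepsilon_t\}$, and it is the overlap of these innovations across lags that generates the inter-season dependence. Concretely, I would expand both components through (\ref{eq0.6}), form the product, and use orthogonality $\mathbb{E}[\varepsilon_s\varepsilon_t]=\sigma^2\mathbf{1}\{s=t\}$ to collapse the double sum to the single convolution $\gamma^{i,k}_X(h)=\sigma^2\sum_{l\ge0}\psi^i_{l}\,\psi^k_{\,l+ph+(k-i)}$, the index shift by roughly $ph$ being the vector lag re-expressed on the scalar clock. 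The periodically varying innovation variance enters only as a (period-averaged) prefactor and may be absorbed into $\sigma_i^2$.

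Next I would insert the tail asymptotics, replacing $\psi^i_l$ by $v_il^{d_i-1}$ and $\psi^k_{l+ph}$ by $v_k(l+ph)^{d_k-1}$, and evaluate the sum as $h\to\infty$. Rescaling $l=ph\,u$ turns it into a Riemann sum converging to the Beta-type integral
\[
\int_0^\infty u^{d_i-1}(1+u)^{d_k-1}\,du=\frac{\Gamma(d_i)\,\Gamma(1-d_i-d_k)}{\Gamma(1-d_k)},
\]
which converges precisely in the causal range $0<d_i,d_k<\tfrac12$ (so that $d_i+d_k<1$). Because the integrand's mass sits at $u=O(1)$, the dominant contribution comes from indices $l\sim ph$, where both coefficients are genuinely in their power-law regime; the finitely many small indices $l=O(1)$, where the asymptotics fail, map to a shrinking neighbourhood of $u=0$ and are negligible. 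This yields $\gamma^{i,k}_X(h)\simeq \sigma_i^2\,v_iv_k\,\frac{\Gamma(d_i)\Gamma(1-d_i-d_k)}{\Gamma(1-d_k)}\,(ph)^{d_i+d_k-1}$, i.e. the power law $h^{d_i+d_k-1}$ predicted by Proposition (2.1), with the constant in Gamma form.

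Finally I would convert to the stated trigonometric form with Euler's reflection identity $\Gamma(z)\Gamma(1-z)=\pi/\sin(\pi z)$, applied to $\Gamma(1-d_i-d_k)$ and to $1/\Gamma(1-d_k)$; this turns the Gamma ratio into $\frac{\Gamma(d_i)\Gamma(d_k)}{\Gamma(d_i+d_k)}\cdot\frac{\sin(\pi d_k)}{\sin(\pi(d_i+d_k))}$, so that together with $v_iv_k$ it reproduces the claimed $R^{i,k}$. The two points demanding the most care are the uniform control of the sum-to-integral passage (the dominated-contribution argument above) and the orientation of the lag: since the cross-covariance of a multivariate long-memory process is \emph{not} symmetric in $(i,k)$, one must compute the covariance with the later time carried by season $k$ (as in Proposition (2.1)) in order to land on $\sin(\pi d_k)$ rather than $\sin(\pi d_i)$.
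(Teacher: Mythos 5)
Your proposal follows essentially the same route as the paper: expand both components through the moving-average coefficients, replace $\psi^i_j$ by its power-law tail $v_i j^{d_i-1}$, pass from the convolution sum to the Beta-type integral $\int_0^\infty u^{d_i-1}(1+u)^{d_k-1}\,du=\Gamma(d_i)\Gamma(1-d_i-d_k)/\Gamma(1-d_k)$ via a Riemann-sum rescaling (the paper cites Appendix A of Kechagias and Pipiras for this step), and finish with Euler's reflection formula. If anything your write-up is the tidier of the two: the paper's displayed $\int_1^\infty$ and exponent $h^{d_i+d_k+1}$ are evidently typos for your $\int_0^\infty$ and $h^{d_i+d_k-1}$, and your observation that the cross-season covariance must be computed from the underlying scalar noise (a diagonal $\Upsilon_j$ with $\mathbb{E}[\varepsilon_m\varepsilon_m^{\prime}]=\sigma_i^2 I$ would literally force $\gamma^{i,k}_X\equiv 0$ for $i\neq k$) addresses an inconsistency the paper leaves unremarked.
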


\begin{proof}
	Let us calculate the autocovariance function $\gamma _{X}^{i,k}(h)$ 
	\begin{eqnarray*}
		\gamma _{X}^{i,k}(h) &\simeq &\sigma _{i}^{2}\underset{j=0}{\overset{\infty }{\sum }}%
		v_{i}j^{d_{i}-1}v_{k}(j+h)^{d_{k}-1} \\
		&\simeq &\sigma _{i}^{2}\underset{j=0}{\overset{\infty }{\sum }}%
		v_{i}v_{k}j^{d_{i}-1}(j+h)^{d_{k}-1} \\
		&\simeq &\sigma _{i}^{2}\underset{j=0}{\overset{\infty }{\sum }}%
		v_{i}v_{k}j^{d_{i}-1}(j+h)^{d_{k}-1} \\
		&\simeq &\sigma _{i}^{2}\underset{j=0}{\overset{\infty }{\sum }}%
		v_{i}v_{k}j^{d_{i}-1}(j+h)^{d_{k}-1}=\underset{j=0}{\overset{\infty }{\sum }}%
		v_{i}v_{k}h^{d_{i}-1}\left( \frac{j}{h}\right) ^{d_{i}-1}h^{d_{k}-1}\left( 
		\frac{j}{h}+1\right) ^{d_{k}-1},
	\end{eqnarray*}
	from (Appendix A, in Kechagias and Pipiras \cite{KP2015})
	\begin{equation*}
	\gamma _{X}^{i,k}(h)\simeq \sigma _{i}^{2}h^{d_{i}+d_{k}+1}v_{i}v_{k}\int_{1}^{\infty
	}z^{d_{i}-1}(z+1)^{d_{k}-1}dz\sim R^{i,k}h^{d_{i}+d_{k}-1},
	\end{equation*}
	where 
	\begin{equation*}
	R^{i,k}=v_{i}v_{k}\frac{\Gamma (d_{i})\Gamma (1-d_{i}-d_{k})}{\Gamma
		(1-d_{k})},
	\end{equation*}
	and using the identity 
	\begin{equation*}
	\Gamma (z)\Gamma (1-z)=\frac{\pi }{\sin (\pi z)},\text{ }0<z<1,\text{ }
	\end{equation*}
	
	we deduce the expression of $R^{i,k}.$
\end{proof}
\section{A new infinite autoregressive representation for PtvARFIMA }
\begin{theorem}
	\label{eq3.1}
	Let $\{X_{i+pm}\}_{m\in 
		\mathbb{Z}
	}$ be the PARFIMA$(0,d_{i},0)$ process, given by the expression (\ref{eq0.4}), then
	
	(i) When $-\frac{1}{2}<d_{i}<\frac{3}{2}$, or $|d_{i}|<1$.  $%
	\{X_{i+pm}\}_{m\in 
		\mathbb{Z}
	}$ is a invertible process with infinite autoregressive representation
	given by
	
	\begin{equation}
	\label{3.2}
	\varepsilon _{i+pm}=\overset{\infty }{\underset{j=0}{\sum }}\Pi
	_{j}(i)X_{i+pm-j}%
	\makeatletter
	\renewcommand\theequation{\thesection.\arabic{equation}}
	\@addtoreset{equation}{section}
	\makeatother%
	.
	\end{equation}
	The coefficients $\Pi _{j}(i)$ $j\geq 1$ in (\ref{eq3.1}) satisfy the following
	relation 
	\begin{equation}
	\Pi _{j}(i)=-(\psi _{j}(i)+\underset{l=1}{\overset{j-1}{\sum }}\Pi
	_{l}(i)\psi _{j-l}(i+k)),%
	\makeatletter
	\renewcommand\theequation{\thesection.\arabic{equation}}
	\@addtoreset{equation}{section}
	\makeatother%
	\end{equation}
	where the weights $\psi _{j}(i)$ $j\geq 0$ are given in (\ref{eq0.6}) and $l\equiv
	k[p]$, $\Pi _{0}(i)=1,$ with $\overset{\infty }{\underset{j=0}{\sum }}|\Pi
	_{j}(i)|<\infty $. 
	
	(ii) When $-\frac{3}{2}<d_{i}<\frac{1}{2}$, or $|d_{i}|<1$. 
	$\{X_{i+pm}\}_{m\in 
		\mathbb{Z}
	}$ is a causal process with infinite moving average representation given by
	
	\begin{equation}
	\label{eq3.3}
	X_{i+pm}=\overset{\infty }{\underset{j=0}{\sum }}\Psi _{j}(i)\varepsilon
	_{i+pm-j}%
	\makeatletter
	\renewcommand\theequation{\thesection.\arabic{equation}}
	\@addtoreset{equation}{section}
	\makeatother%
	.
	\end{equation}
	The coefficients $\Psi _{j}(i)$ $j\geq 1$ in (\ref{eq3.3}) satisfy the following
	relation 
	\begin{equation}
	\Psi _{j}(i)=-(\pi _{j}(i)+\underset{l=1}{\overset{j-1}{\sum }}\Psi
	_{l}(i)\pi _{j-l}(i+k)),%
	\makeatletter
	\renewcommand\theequation{\thesection.\arabic{equation}}
	\@addtoreset{equation}{section}
	\makeatother%
	\end{equation}
	where the weights $\pi _{j}(i)$ $j\geq 0$ are given in (\ref{eq0.5}) and $l\equiv
	k[p]$, $\Pi _{0}(i)=1,$ with $\overset{\infty }{\underset{j=0}{\sum }}|\Psi
	_{j}(i)|<\infty $.
\end{theorem}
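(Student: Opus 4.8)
The plan is to prove (i) by inverting the causal representation (\ref{eq0.6}) season by season, to obtain (ii) by the mirror argument inverting the autoregressive representation (\ref{eq0.5}), and in both cases to reach the extended parameter range from the classical one by a single ordinary differencing. I treat (i) in detail, since (ii) follows after exchanging the roles of $\psi_j(i)$ and $\pi_j(i)$, equivalently reflecting $d_i\mapsto -d_i$.

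\emph{Deriving the recursion.} First I would posit $\varepsilon_{i+pm}=\sum_{j\ge 0}\Pi_j(i)X_{i+pm-j}$ and substitute, for each lag $l$, the causal expansion $X_{i+pm-l}=\sum_{n\ge 0}\psi_n(i+k)\,\varepsilon_{i+pm-l-n}$, where $i+k$ records the season of $X_{i+pm-l}$ (so $l\equiv k\ [p]$) and we use the periodicity $d_{t+p}=d_t$. Regrouping the double sum by the total lag $r$ of the innovation $\varepsilon_{i+pm-r}$ and recalling $\psi_0=1$, the coefficient of $\varepsilon_{i+pm}$ forces $\Pi_0(i)=1$, while for each $r\ge 1$ the demand that the right-hand side collapse to the single innovation $\varepsilon_{i+pm}$ gives $\sum_{l=0}^{r}\Pi_l(i)\psi_{r-l}(i+k)=0$; isolating the top index $\Pi_r(i)$ reproduces the stated recursion. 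This step is routine bookkeeping once the seasonal shift of each lag is tracked.

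\emph{Summability on the classical band.} For $0<d_i<\tfrac12$ both (\ref{eq0.5}) and (\ref{eq0.6}) hold, and I would establish $\sum_j|\Pi_j(i)|<\infty$ through the blocked $p$-variate model of Section 2: stacking $X_m$ and $\varepsilon_m$, the causal filter becomes a matrix power series $A(z)=\sum_{k\ge 0}A_kz^k$ with $(A_k)_{i,i'}=\psi_{i-i'+pk}(i)$ (and $0$ when this index is negative), whose constant term $A_0$ is lower triangular with unit diagonal, hence invertible. The scalar weights $\Pi_j(i)$ are precisely the re-expanded entries of $A(z)^{-1}$, so their absolute summability is that of the inverse matrix filter. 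Since $\psi_n(i)\sim v_i n^{d_i-1}$ by (\ref{eq1.4}), row $i$ of $A(z)$ carries a $(1-z)^{-d_i}$ singularity at $z=1$ and is otherwise analytic on $|z|\le 1$; applying the transfer estimate used in Appendix A of Kechagias and Pipiras \cite{KP2015} then yields $\Pi_j(i)=O\big(j^{-d_i-1}\big)$, which is summable.

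\emph{The extension and the hard part.} For $\tfrac12<d_i<\tfrac32$ I would difference once: $W_{i+pm}:=(1-B)X_{i+pm}$ satisfies $(1-B)^{d_i-1}W_{i+pm}=\varepsilon_{i+pm}$, a PtvARFIMA whose seasonal parameter $d_i-1$ lies in $(-\tfrac12,\tfrac12)$, so the previous paragraph applies to $W$ and the summable factor $(1-B)$ is composed back; the cutoff $\tfrac32$ is exactly where $d_i-1$ leaves the admissible band. The genuinely delicate regime is $-\tfrac12<d_i\le 0$, where the non-periodic inverse weights already fail to be absolutely summable: here the main obstacle is to show that the simultaneous presence of the $p$ distinct singularities $(1-z)^{-d_1},\dots,(1-z)^{-d_p}$ in $\det A(z)$ still leaves $A(z)^{-1}$ with summable coefficients, which forces one to exploit the coupling of all $p$ seasons jointly and to verify that $\det A(z)$ has no zeros on $|z|\le 1$ apart from the controlled singularity at $z=1$. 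Part (ii) is then identical after replacing (\ref{eq0.6}) by (\ref{eq0.5}), $\psi\mapsto\pi$, and reflecting the interval about $d_i=0$.
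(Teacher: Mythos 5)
Your derivation of the recursion for $\Pi_j(i)$ --- posit the autoregressive form, substitute the seasonal causal expansion of each $X_{i+pm-l}$, and match coefficients of $\varepsilon_{i+pm-r}$ --- is the same formal computation as the paper's, which does the identical bookkeeping by successively eliminating $X_{i+pm-1}, X_{i+pm-2},\dots$ from the moving-average expansion and reads off $\phi_j(i)=\psi_j(i)-\sum_{l=1}^{j-1}\phi_l(i)\psi_{j-l}(i+k)$ with $\Pi_j(i)=-\phi_j(i)$. Up to that point the two arguments coincide, including the convention that the season of lag $l$ is recorded as $i+k$ with $l\equiv k\,[p]$.

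Where you part ways is the absolute summability $\sum_j|\Pi_j(i)|<\infty$ and the justification of the parameter range. The paper offers no analytic argument here: after deriving the recursion it states that the result ``is illustrated by the simulation in the next section,'' and part (ii) is dispatched by ``replace $d_i$ by $-d_i$.'' Your plan to pass to the blocked $p$-variate filter $A(z)$, invert it, and extract a decay rate for the entries of $A(z)^{-1}$ is therefore genuinely more than the paper attempts, and your once-differencing reduction for $\tfrac12<d_i<\tfrac32$ is a sound way to reach the extended band from the classical one. But the gap you flag yourself is real and, in fact, fatal to the full claim: the rate $\Pi_j(i)=O\bigl(j^{-d_i-1}\bigr)$ is summable only when $d_i>0$, and in the degenerate case where all seasons carry the same $d<0$ (effectively $p=1$) one has $\pi_j\sim j^{-d-1}/\Gamma(-d)$, which is not absolutely summable; no amount of periodic coupling can repair this when the $d_i$ coincide. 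So on the subinterval $-\tfrac12<d_i\le 0$ the assertion $\sum_j|\Pi_j(i)|<\infty$ cannot be established as stated --- at best one obtains invertibility in the weaker mean-square sense of Odaki --- and neither your sketch nor the paper's proof (which tests convergence numerically via $|\Pi_N(i)-\Pi_{N+1}(i)|$, a criterion that the harmonic series already defeats) closes this. In short: your recursion matches the paper's; your summability programme is incomplete, but the paper contains none at all, and the obstruction you identify is genuine.
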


\begin{proof}
	For simplicity of notation, we use $\psi _{j}(i)=\psi _{j}^{i}$ and $\pi _{j}(i)=\pi _{j}^{i}$.\\
	$(i)$ We have
	\begin{equation}
	X_{i+pm}=\overset{\infty }{\underset{j=0}{\sum }}\psi _{j}(i)\varepsilon
	_{i+pm-j},%
	\makeatletter
	\renewcommand\theequation{\thesection.\arabic{equation}}
	\@addtoreset{equation}{section}
	\makeatother%
	\end{equation}
	we can develop the series as follows	
	\begin{eqnarray*}
		X_{i+pm} &=&\varepsilon _{i+pm}+\psi _{1}(i)\varepsilon _{i+pm-1}+\psi
		_{2}(i)\varepsilon _{i+pm-2}+.............. \\
		&=&\varepsilon _{i+pm} \\
		&&+\psi _{1}(i)[\overset{X_{i+pm-1}}{\overbrace{\varepsilon _{i+pm-1}+\psi
				_{1}(i+1)\varepsilon _{i+pm-2}+\psi _{2}(i+1)\varepsilon _{i+pm-3}......}}]
		\\
		&&-\psi _{1}(i)[\psi _{1}(i+1)\varepsilon _{i+pm-2}+\psi
		_{2}(i+1)\varepsilon _{i+pm-3}............] \\
		&&+\psi _{2}(i)\varepsilon _{i+pm-2}+\psi _{3}(i)\varepsilon
		_{i+pm-3}...............
	\end{eqnarray*}
	
	\begin{eqnarray*}
		X_{i+pm}-\psi _{1}(i)X_{i+pm-1} &=&\varepsilon _{i+pm} \\
		&&+[\psi _{2}(i)-\psi _{1}(i)\psi _{1}(i+1)][\overset{X_{i+pm-2}}{\overbrace{%
				\varepsilon _{i+pm-2}+\psi _{1}(i+2)\varepsilon _{i+pm-3}+.......}}] \\
		&&-[\psi _{2}(i)-\psi _{1}(i)\psi _{1}(i+1)][\psi _{1}(i+2)\varepsilon
		_{i+pm-3}............] \\
		&&+[\psi _{3}(i)-\psi _{1}(i)\psi _{2}(i+1)]\varepsilon
		_{i+pm-3}+...............
	\end{eqnarray*}
	
	\begin{eqnarray*}
		X_{i+pm}-\overset{\phi _{1}(i)}{\overbrace{\psi _{1}(i)}}X_{i+pm-1}
		&=&\varepsilon _{i+pm} \\
		&&+[\overset{\phi _{2}(i)}{\overbrace{\psi _{2}(i)-\psi _{1}(i)\psi _{1}(i+1)%
			}}][\overset{X_{i+pm-2}}{\overbrace{\varepsilon _{i+pm-2}+\psi
			_{1}(i+2)\varepsilon _{i+pm-3}+.......}}] \\
	&&-[\psi _{2}(i)-\psi _{1}(i)\psi _{1}(i+1)][\psi _{1}(i+2)\varepsilon
	_{i+pm-3}............] \\
	&&+[\psi _{3}(i)-\psi _{1}(i)\psi _{2}(i+1)]\varepsilon
	_{i+pm-3}+...............
\end{eqnarray*}

\begin{equation}
X_{i+pm}-\phi _{1}(i)X_{i+pm-1}-\phi
_{2}(i)X_{i+pm-2}................=\varepsilon _{i+pm},%
\makeatletter
\renewcommand\theequation{\thesection.\arabic{equation}}
\@addtoreset{equation}{section}
\makeatother%
\end{equation}
such that 
\begin{equation}
\phi _{j}(i)=\psi _{j}(i)-\overset{j-1}{\underset{l=1}{\sum }}\phi
_{l}(i)\psi _{j-l}(i+k),%
\makeatletter
\renewcommand\theequation{\thesection.\arabic{equation}}
\@addtoreset{equation}{section}
\makeatother%
\end{equation}
with $l\equiv k[p].$ Next it is easy to show that $\Pi _{j}(i)=-\phi _{j}(i).$ This result is illustrated by the simulation in the next section.

$(ii)$ The proof is similar to (i) but $d_{i}$ must be replaced by $-d_{i}.$
\end{proof}

\section{Simulation}

In this section, we provide some simple examples, to illustrate the above theoretical results in the invertible case, as well as the non-invertible case for $p=2$. More precisely, we show that the absolute sum of coefficients $\Pi _{j}(i)$ is finite for $-\frac{1}{2}<d_{i}<\frac{3}{2}$ or $|d_{i}|<1$ in the first case and infinite for $d_{1}<-\frac{1}{2}$ and $d_{2}<\frac{3}{2}$,	$d_{1}>-\frac{1}{2}$ and $d_{2}>\frac{3}{2}$ or $|d_{i}|>1$ in the second case, for all $N$ such that $j=0,1,.....,N$. The sum of $\Pi _{j}(i)$ is summable absolutely if the difference between
the previous and the present value of $\Pi _{j}(i)$, e.g. $(|\Pi _{N}(i)-\Pi
_{N+1}(i)|)$, will be very low or negligible. Numerical values of  $|\Pi _{N}(i)-\Pi_{N+1}(i)|$ are presented in table (1) for the invertible case. Table (2) for non invertible case, we find that the results indicated in the tables reflect the theoretical framework.

\begin{table}[]
	\centering
	\begin{tabular}{|ll|l|l|l|l|l|l|l|l|l|}
		\hline
		\multicolumn{1}{|l|}{$N$} & $d_{1}$ & $d_{2}$ & $d_{1}$ & $d_{2}$ & $d_{1}$
		& $d_{2}$ & $d_{1}$ & $d_{2}$ & $d_{1}$ & $d_{2}$ \\ \cline{2-11}
		\multicolumn{1}{|l|}{} & $0.15$ & $0.8$ & $1.49$ & $-0.49$ & $0.75$ & $0.2$
		& $0.9$ & $0.09$ & $-0.2$ & $-0.7$ \\ \hline
		$10$ & \multicolumn{2}{|l}{$1.976584\times 10^{-03}$} & \multicolumn{2}{|l}{$%
			4.02289467$} & \multicolumn{2}{|l}{$0.0128260513$} & \multicolumn{2}{|l}{$%
			6.675804\times 10^{-03}$} & \multicolumn{2}{|l|}{$2.775860\times -04$} \\ 
		\hline
		$25$ & \multicolumn{2}{|l}{$7.551323\times 10^{-04}$} & \multicolumn{2}{|l}{$%
			1.04204572$} & \multicolumn{2}{|l}{$0.0016859353$} & \multicolumn{2}{|l}{$%
			6.473788\times 10^{-04}$} & \multicolumn{2}{|l|}{$1.642120\times -04$} \\ 
		\hline
		$50$ & \multicolumn{2}{|l}{$2.060515\times 10^{-04}$} & \multicolumn{2}{|l}{$%
			0.71213528$} & \multicolumn{2}{|l}{$0.0007002542$} & \multicolumn{2}{|l}{$%
			2.067307\times 10^{-04}$} & \multicolumn{2}{|l|}{$5.866394\times -05$} \\ 
		\hline
		$75$ & \multicolumn{2}{|l}{$1.214109\times 10^{-04}$} & \multicolumn{2}{|l}{$%
			0.50327845$} & \multicolumn{2}{|l}{$0.0003220433$} & \multicolumn{2}{|l}{$%
			8.734869\times 10^{-05}$} & \multicolumn{2}{|l|}{$4.469138\times -05$} \\ 
		\hline
		$100$ & \multicolumn{2}{|l}{$7.671041\times 10^{-05}$} & \multicolumn{2}{|l}{%
			$0.09702500$} & \multicolumn{2}{|l}{$0.0001664161$} & \multicolumn{2}{|l}{$%
			4.752873\times 10^{-05}$} & \multicolumn{2}{|l|}{$3.821700\times -05$} \\ 
		\hline
		\multicolumn{1}{|l|}{$N$} & $d_{1}$ & $d_{2}$ & $d_{1}$ & $d_{2}$ & $d_{1}$
		& $d_{2}$ & $d_{1}$ & $d_{2}$ & $d_{1}$ & $d_{2}$ \\ \cline{2-11}
		\multicolumn{1}{|l|}{} & $-0.6$ & $-0.3$ & $-0.9$ & $-0.09$ & $-0.5$ & $-0.4$
		& $-0.49$ & $-0.9$ & $0.49$ & $0.09$ \\ \hline
		$10$ & \multicolumn{2}{|l}{$0.0252137$} & \multicolumn{2}{|l}{$4.0228946$} & 
		\multicolumn{2}{|l}{$0.01134118$} & \multicolumn{2}{|l}{$2.696864\times
			10^{-03}$} & \multicolumn{2}{|l|}{$0.01195167$} \\ \hline
		$25$ & \multicolumn{2}{|l}{$0.0092933$} & \multicolumn{2}{|l}{$1.0420457$} & 
		\multicolumn{2}{|l}{$0.00540992$} & \multicolumn{2}{|l}{$4.487907\times
			10^{-04}$} & \multicolumn{2}{|l|}{$0.00190099$} \\ \hline
		$50$ & \multicolumn{2}{|l}{$0.0061061$} & \multicolumn{2}{|l}{$0.7121352$} & 
		\multicolumn{2}{|l}{$0.00242034$} & \multicolumn{2}{|l}{$9.400265\times
			10^{-05}$} & \multicolumn{2}{|l|}{$0.00097871$} \\ \hline
		$75$ & \multicolumn{2}{|l}{$0.0040766$} & \multicolumn{2}{|l}{$0.5032784$} & 
		\multicolumn{2}{|l}{$0.00173045$} & \multicolumn{2}{|l}{$4.431224\times
			10^{-05}$} & \multicolumn{2}{|l|}{$0.00047477$} \\ \hline
		$100$ & \multicolumn{2}{|l}{$0.0032788$} & \multicolumn{2}{|l}{$0.0970250$}
		& \multicolumn{2}{|l}{$0.00131972$} & \multicolumn{2}{|l}{$2.515928\times
			10^{-05}$} & \multicolumn{2}{|l|}{$0.00032591$} \\ \hline
	\end{tabular}%
	\caption{ Values of $(|\Pi _{N}(i)-\Pi
		_{N+1}(i)|)$ for $-\frac{1}{2}<d_{i}<\frac{3}{2}$ or $|d_{i}|<1$ and various $N$}
	\label{tab1}
\end{table}

\begin{table}[]
	\centering
	\begin{tabular}{|ll|l|l|l|l|l|}
		\hline
		\multicolumn{1}{|l|}{$N$} & $d_{1}$ & $d_{2}$ & $d_{1}$ & $d_{2}$ & $d_{1}$
		& $d_{2}$ \\ \cline{2-7}
		\multicolumn{1}{|l|}{} & $-0.6$ & $1.49$ & $-0.4$ & $1.65$ & $-1.2$ & $-1.4$
		\\ \hline
		$10$ & \multicolumn{2}{|l}{$1.4451672$} & \multicolumn{2}{|l}{$0.7828075$} & 
		\multicolumn{2}{|l|}{$0.2448915$} \\ \hline
		$25$ & \multicolumn{2}{|l}{$11.2747847$} & \multicolumn{2}{|l}{$4.0371864$}
		& \multicolumn{2}{|l|}{$0.4049227$} \\ \hline
		$50$ & \multicolumn{2}{|l}{$21.7021959$} & \multicolumn{2}{|l}{$3.6322384$}
		& \multicolumn{2}{|l|}{$0.4341756$} \\ \hline
		$75$ & \multicolumn{2}{|l}{$332.0970990$} & \multicolumn{2}{|l}{$27.3838075$}
		& \multicolumn{2}{|l|}{$0.5108372$} \\ \hline
		$100$ & \multicolumn{2}{|l}{$1686.6896433$} & \multicolumn{2}{|l}{$68.7898114
			$} & \multicolumn{2}{|l|}{$0.5445771$} \\ \hline
	\end{tabular}%
	\caption{Values of $(|\Pi _{N}(i)-\Pi
		_{N+1}(i)|)$ for $d_{1}<-\frac{1}{2}$ and $d_{2}<\frac{3}{2}$,	$d_{1}>-\frac{1}{2}$ and $d_{2}>\frac{3}{2}$ or $|d_{i}|>1$ and various $N$}
	\label{tab2}
\end{table}

For $-\frac{1}{2}<d_{i}<\frac{3}{2}$ or $|d_{i}|<1$ (see table \ref{tab1})
it is easy to see that the sum of $|\Pi _{j}(i)|$ converges, i.e the sum
of $|\Pi _{j}(i)|$ stabilizes quickly in this case, since the quantity  $|\Pi _{N}(i)-\Pi _{N+1}(i)|$  decrease when $N$ increase and have
inconsiderable values. Contrarily, if the values of $d_{i}$ are outside the intervals cited (see table \ref{tab2}), this demonstrates that the sum diverges even for large values of $N$, in this case the
quantity $|\Pi _{N}(i)-\Pi _{N+1}(i)|$ is considerable and sometimes increases when $N$ increase.

\section{Conclusion}
\label{sec:conc}

In this article, we have discussed in one hand the p-variate stationary ARFIMA models associated with the PARFIMA model considered here. We have represented the model as a causal representation, showing that the asymptotic behavior of the periodic autocovariance function can be obtained using the multivariate moving average representation. On the other hand, we have established a new infinite autoregressive representation for the PARFIMA model and illustrated the results via a simulation study.

\end{document}